\newtheorem{thm}{Theorem}[section]
\newtheorem{theorem}[thm]{Theorem}
\newtheorem{definition}[thm]{Definition}
\newtheorem{remark}[thm]{Remark}
\def\R{\mathbb{R}}
\def\N{\mathbb{N}}
\def\U{\mathbb{U}}
\def\arginf{\mathop{\rm arginf}}
\newenvironment{myalign*}{\vspace*{-0.2cm}\align}{\endalign\vspace*{-0.6cm}\\}
\title{\LARGE \bf
Stability of Observer Based Predictive Control for Nonlinear Sampled-data Systems
}
\author{ \parbox{3 in}{\centering J\"{u}rgen Pannek
        \thanks{This work was supported by the DFG priority program 1305 and the Elitenetzwerk Bayern.}\\
        Mathematical Institute\\
        University of Bayreuth\\
        95440 Bayreuth, Germany\\
        {\tt\small juergen.pannek@uni-bayreuth.de}}
        \hspace*{ 0.5 in}
        \parbox{3 in}{ \centering Marcus von Lossow\\
        Mathematical Institute \\
        University of Bayreuth\\
        95440 Bayreuth, Germany\\
        {\tt\small marcus.vonlossow@uni-bayreuth.de}}
}
\begin{document}
\maketitle
\thispagestyle{empty}
\pagestyle{empty}
\begin{abstract}
	We propose a new model predictive control (MPC) approach which is completely based on an observer for the state system. For this, we show semiglobally practically asymptotic stability of the closed--loop for an abstract observer and illustrate our results for a numerical example.
\end{abstract}
\section{Introduction}
Nowadays, a popular control technique for digital control problems is model predictive control (MPC). Within such a method, a prediction of the state trajectory over finite horizon is used for the optimization which is based on estimates of the exact initial state. Then, the first control value is implemented and the problem is shifted forward in time. For these systems we can not expect global stabilization by output feedback, see e.g. \cite{MPD94} for a counterexample. Yet, using suitable controllability and observability assumptions, semiglobal practical asymptotic stabilizability of the system can be guaranteed \cite{ST2003}.\\
In particular the actual state and its estimate do not have to coincide. To avoid the possible divergence of the system Shim and Teel split the available computing time in \cite{ST2003}. First they calculate a good approximation of the actual state using some kind of high gain observer and implement a feedback based on this estimate. The generated delays and errors, however, are handled by robustness of the controller itself and are not integrated in the setup.\\
Here our aim is to incorporate these aspects within the controller design. To this end we consider the by now well know concept of model predictive controllers MPC in a sampled--data fashion. \\
Within the standard setting of MPC with state estimation one utilizes an external approximation of the state and minimizes a given cost functional according to the development of a simulated system with initial values based on the external estimates. This is carried out by appropriately choosing a control sequence for this simulation over a finite horizon. Then the computed control value is implemented at a future time instant and the procedure is repeated, see e.g. \cite{AZ2000,MM95,GMTT2005,JH2005,GNP2007}.\\
Since the internally predicted trajectory is based on estimated and not exact state information, it may deviate from the real one. Even worse, the corresponding control may even be counterproductive considering the desired behavior. In general, this prediction yields a poor estimate of the state at the implementation point, especially if we compare it to the approximation using the (already given) state estimation procedure since such an observer was designed for this task. Hence, this additional knowledge of the system is ignored in this setting instead of utilizing it to construct a comparatively more consistent computation basis. \\
However, simply integrating this approximation into the MPC concept naturally leads to a time acausality, that is future measurements are necessary to generate the estimate, see \cite{RN2004} for details. To overcome this deficiency and at the same time to analytically handle the delays and errors occurring in the output--feedback emulation design of \cite{ST2003} we propose a nonstandard MPC closed--loop scheme. Here the prediction over the control horizon is calculated not by using a model of the plant and some estimated initial value, but by making use of a retarded observer. Hence, the resulting control law is computed according to a simulation of the observer approximating the real (future) state of the plant instead of the forward prediction of a model of the plant which may diverge. Therefore the control is calculated more consistently and an improved behavior of the resulting closed--loop can be expected. Moreover we avoid the necessity of splitting the available computing time between observer and controller, i.e. guaranteeing observability and stabilization, since these parts are merged here within one computing step.\\
The main goal of this paper is to provide a mathematically rigorous stability analysis of the proposed scheme. Section \ref{setupsec} describes the setup and formalize the MPC scheme we propose. In Section \ref{theorem} we show asymptotic stability for our proposed MPC scheme. Finally, we illustrate our setup using an example in Section \ref{example} and draw some conclusions in Section \ref{conclusion}.
\section{Problem formulation}\label{setupsec}
Consider a nonlinear plant
\begin{align}
	\label{Problem formulation:plant}
	\dot{x}(t) = f(x(t), u(t)), \qquad y(t) = h(x(t))
\end{align}
with vector field $f: \R^{n} \times \U \rightarrow \R^n$. Here $x(t) \in \R^{n}$ denotes the state variable, $u \in \U \subset \R^{m}$ the exogenous control variable and $y \in \R^{p}$ the output. Now we want to stabilize the origin based on the structural information available from the plant and the measured outputs using a digital computer with sampling and zero order hold at the sampling time instants $t_{k} = k \cdot T$, $k \in \N$, $T \in \R_{>0}$. \\
In order to achieve that we construct an observer system
\begin{align}
	\label{Problem formulation:observer}
	\dot{\xi}(t) = g(\xi(t), \xi(t - \theta), y(t - \theta), u(t))
\end{align}
with $\theta > 0$ which is based on past information of the output of the plant \eqref{Problem formulation:plant}. Using \eqref{Problem formulation:observer} we calculate a stabilizing sampled--data feedback $u_{T}(\xi(t))$ with zero order hold using a MPC algorithm, see e.g. \cite{GNP2007}. Hence our control law will be based entirely on the observer system and the available measurements.
\begin{figure}[!ht]
	\centering
	\caption{Scheme of the considered control system}
	\label{fig:scheme}
	\psset{xunit=0.75cm,yunit=0.75cm,runit=0.75cm,arrowscale=1}
	\linethickness{0.5pt}
	\begin{pspicture}(-1.5,-3.0)(7.0,1.5)
		\sffamily
		\linethickness{1pt}
		\put(0.0, -0.5){\framebox(2.0, 1.0)}
		\rput[0](1.0, 0.0){\parbox[]{2.0cm}{\centering \small{Observer}}}
		\put(0.25, 0.55){\framebox(1.5, 0.5)}
		\rput[0](1.0, 0.76){\parbox[]{2.0cm}{\centering \small{Buffer}}}
		\psline{->}(2.0, 0.0)(3.5, 0.0)(3.5, -2.0)(2.0, -2.0)
		\put(0.0, -2.5){\framebox(2.0, 1.0)}
		\rput[0](1.0, -2.0){\parbox[]{2.0cm}{\centering \small{Minimizer}}}
		\psline{->}(0.0, -2.0)(-1.5, -2.0)(-1.5, 0.0)(0.0, 0.0)
		\psline{->}(1.0, -2.5)(1.0, -3.0)(5.0, -3.0)(5.0, -1.5)
		\put(4.0, -1.5){\framebox(2.0,1.0)}
		\rput[0](5.0, -1.0){\parbox[]{2.0cm}{\centering \small{Plant}}}
		\psline{->}(5.0, -0.5)(5.0, 0.75)(1.75, 0.75)
	\end{pspicture}
\end{figure}
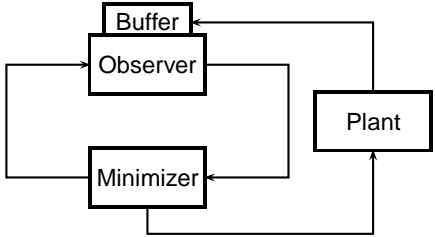
\FloatBarrier
Next we use the constructed feedback to control the plant \eqref{Problem formulation:plant}, and hence the sampled--data closed--loop system is given by
\begin{align*}
	\dot{x}(t) = f(x(t), u_{T}(\xi(t_{j}))), \quad t \in [t_{j}, t_{j + 1}).
\end{align*}
Our goal is to show that we can modify the usual model predictive control algorithm to an observer based predictive control algorithm (OBPC) which semiglobally stabilizes the system \eqref{Problem formulation:plant}.\\
To this end we compute the observer trajectory according to
\begin{align}
	\label{Problem formulation:OBPC:observer} 
	\dot{\xi}(t) = g(\xi(t), \xi(t - N T), y(t - N T), v_{j})
\end{align}
for $t \in [t_{j}, t_{j + 1})$, $j \in \{ 0, \ldots, N - 1 \}$. The solution of the system \eqref{Problem formulation:OBPC:observer} at time $t$ will be denoted by $\xi(t, \xi_0(\cdot), v)$ or $\xi(t)$ given the initial values $\xi(\theta)$, $\theta \in [t_0 - N T, t_0]$, and control sequence $v_{[0, \ldots, N-1]} \in \U^N$. Also we will assume that a unique solution of the observer system $g(\cdot, \cdot, \cdot, \cdot): \R^{n} \times \R^{n} \times \R^{p} \times \R^{m} \rightarrow \R^{n}$ exists for any initial value in a given compact neighborhood $\mathcal{N}$ containing the origin in the first and second argument. Additionally we assume that $y(\tau)$, $\tau \in [t_{0} - N T, t_{0}]$, are known output values of the plant.\\
Based on system \eqref{Problem formulation:OBPC:observer} we calculate an open--loop optimal control of zero order hold
\begin{align}
	\label{Problem formulation:OBPC:control}
	\hat{u}_{[0, N - 1]} = \arginf_{v_{[0, N - 1]}} J_{N} (\xi(t_0), v_{[0, N-1]})
\end{align}
such that the cost functional
\begin{align*}
	J_{N}(\xi(t_0), v_{[0, N - 1]}) := \sum_{j = 0}^{N - 1} \int\limits_{t_{j}}^{t_{j + 1}} l(\xi(t), v_{j}) dt \, + \, F(\xi(t_{N}))
\end{align*}
is minimized. Similar to standard MPC the length of the horizon $[t_{0}, t_{0} + N T]$ is fixed by the sampling time $T \in \R_{>0}$ and the parameter $N \in \N$. Analogously we call $l: \R^{n} \times \R^{m} \rightarrow \R_{\geq 0}$ stage cost and $F: \R^{n} \rightarrow \R_{>0}$ terminal cost.\\
Now we implement the first element $\hat{u}_0$ of the sequence $\hat{u}_{[0, N - 1]}$ into the state system \eqref{Problem formulation:plant}. The resulting output values $y(\tau)$, $\tau \in [t_{0}, t_{0} + T]$ are measured and stored. Then we can continue with the usual MPC procedure, shift the horizon of the open--loop optimization in a receding horizon fashion and iterate these steps. The resulting control will be denoted by
\begin{align}
	\label{Problem formulation:OBPC:control2}
	u_T(\xi(t_j)) := \hat{u}_0^{(j)}
\end{align}
where $\hat{u}_0^{(j)}$ represents the first element of the control sequence of the $j$-th iterate.\\
For reasons of simplicity we assume that the infimum in \eqref{Problem formulation:OBPC:control} is attained. Additionally we assume $f(x, u)$ to be locally Lipschitz in $x$ and $h: \R^{n} \rightarrow \R^{p}$ to be locally Lipschitz with $h(0) = 0$.
\begin{remark}
	Within the standard MPC setting, the feedback is computed based on an \textit{estimate of the state}, see e.g. \cite{MM95,MNS2001}. To obtain a minimizing control, however, a \textit{model of the plant} is simulated forward in time.
	\begin{figure}[!ht]
		\centering
		\caption{Standard scheme for MPC}
		\label{fig:old scheme}
		\psset{xunit=0.75cm,yunit=0.75cm,runit=0.75cm,arrowscale=1}
		\linethickness{0.5pt}
		\begin{pspicture}(-1.5,-3.0)(7.0,1.5)
			\sffamily
			\linethickness{1pt}
			\put(0.0, -0.5){\framebox(2.0, 1.0)}
			\rput[0](1.0, 0.0){\parbox[]{2.0cm}{\centering \small{Simulation}}}
			\psline{->}(2.0, 0.0)(3.5, 0.0)(3.5, -2.0)(2.0, -2.0)
			\put(0.0, -2.5){\framebox(2.0, 1.0)}
			\rput[0](1.0, -2.0){\parbox[]{2.0cm}{\centering \small{Minimizer}}}
			\psline{->}(0.0, -2.0)(-1.5, -2.0)(-1.5, 0.0)(0.0, 0.0)
			\psline{->}(1.0, -2.5)(1.0, -3.0)(5.0, -3.0)(5.0, -2.5)
			\put(4.0, -2.5){\framebox(2.0,1.0)}
			\rput[0](5.0, -2.0){\parbox[]{2.0cm}{\centering \small{Plant}}}
			\psline{->}(5.0, -1.5)(5.0, -0.5)
			\put(4.0, -0.5){\framebox(2.0,1.0)}
			\rput[0](5.0, 0.0){\parbox[]{2.0cm}{\centering \small{Observer}}}
			\psline{->}(5.0, 0.5)(5.0, 1.0)(1.0, 1.0)(1.0, 0.5)
		\end{pspicture}
	\end{figure}
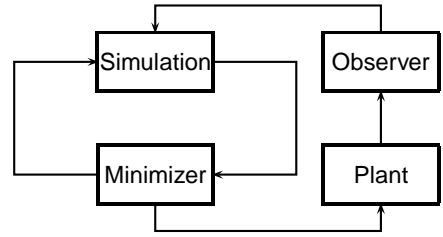
	Since there are no restraints on the plant, the initial error may --- in the worst case --- be amplified by the factor $e^{L\cdot NT}$ where $L$ denote the local Lipschitz constant of the plant and $NT$ the simulated horizon. Hence, a control computed on this basis is not computed consistently and the resulting closed--loop behavior may be far from optimal.\\
	Comparing Figures \ref{fig:scheme} and \ref{fig:old scheme} reveals the difference between the proposed and the standard setting of MPC: Here, we do not simulate the plant itself but use the observer for the prediction within the MPC algorithm. The feedback is therefore not depending on the state but on an observer of the state which is time--retarded. Hence, if the retarded observer approximates the state of the plant, the control is computed based on a simulation converging towards the actual state of the plant instead of a possibly diverging one. For this reason we expect an enhanced behavior of the closed--loop.
\end{remark}
\begin{remark}\label{emulateremark}
	In practice we cannot measure and store the output values for all $\tau \in [t_{0}, t_{0} + T]$. For practical applications one can overcome this problem as follows:\\
	i) Introduce a second (output) sampling time $\hat{T} \ll T$ such that only $y(\tau)$, $\tau = t_{0} + i \hat{T}$, $i = 0, \ldots, M$ with $M \hat{T} < T \leq (M + 1) \hat{T}$ are stored. However, if this sampling time is sufficiently small we can use results from \cite{NTK99} to guarantee that, given certain consistency and stability conditions, a control $u$ which stabilizes
	\begin{align}
		\label{Problem formulation:Remark:observer discrete}
		\dot{\xi}(t) = g(\xi(t), \xi(t_{j} - N T), y(t_{j} - N T), u(t))
	\end{align}
	for $t \in [t_{j}, t_{j + 1})$, $t_{j} = t_{0} + j \hat{T}$ also stabilizes \eqref{Problem formulation:OBPC:observer}. \\
	Conversely, it was shown in \cite{LNT2002} that if the system \eqref{Problem formulation:OBPC:observer} is stabilized by $u(t)$ for a continuous $y$ and if the sampling time is sufficiently small then also the in $y$ ``emulated system" \eqref{Problem formulation:Remark:observer discrete} is stabilized.\\
	ii) Alternatively one can use an interpolation procedure to approximate the past continuous output $y$ based on measurements at the sampling instants $t_j$. However, if the sampling time is large then one still needs to introduce a finer output grid to obtain reliable information.
\end{remark}
In order to define stability of the closed--loop system, i.e. the plant in Figure \ref{fig:scheme} given by
\begin{align}
	\label{Problem formulation:closed--loop}
	\dot{x}(t) & = f(x(t), u_{T}(\xi(t_{j}))), \qquad y(t) = h(x(t)) \\
	\dot{\xi}(t) & = g(\xi(t), \xi(t - N T), y(t - N T), u_{T}(\xi(t_{j}))) \nonumber
\end{align}
for $t \in [t_{j}, t_{j + 1})$ where $u_T$ is given by \eqref{Problem formulation:OBPC:control2}, we introduce the notion of comparison functions: A function $\gamma: \R_{\geq 0} \rightarrow \R_{\geq 0}$ is of class $\mathcal{K}$ if it is continuous, zero at zero and strictly increasing. It is of class $\mathcal{K}_{\infty}$ if it is also unbounded. A function belongs to class $\mathcal{L}$ if it is strictly positive and it is decreasing to zero as its argument tends to infinity. Last, a function $\beta: \R_{\geq 0} \times \R_{\geq 0} \rightarrow \R_{\geq 0}$ is called $\mathcal{KL}$--function if for every fixed $t \geq 0$ the function $\beta(\cdot, t)$ is an element of class $\mathcal{K}$ and for each fixed $s > 0$ the function $\beta(s, \cdot)$ is of class $\mathcal{L}$. \\
Here we seek stability of the closed--loop in the following sense:
\begin{definition}[Semi--global practical Stability]\label{Problem formulation:stability}$\\ $
	Consider a control system
	\begin{align*}
		\dot{x}(t) & = F(x(t), x(t - NT), u_T)
	\end{align*}
	where $F(0, 0, 0) = 0$ and a family of control functions $u_T$ for $T \in (0, T^\star]$. If there exists a function $\beta \in \mathcal{KL}$ and a pair $( \Delta_{1}, \Delta_{2} )$ of positiv real--valued numbers such that the inequality
	\begin{align*}
		\| x(t, x_0, u_T) \| \leq \max \left\{ \beta(\| x_0 \|, t), \Delta_{2} \right\}
	\end{align*}
	holds for all $t \geq t_0$ and all initial values $x_0(\theta) \in B_{\Delta_{1}}^{0}(0)$, $\theta \in [t_0 - N T, t_0]$, then the origin is called semi--globally practically asymptotically stabilizable.
\end{definition}
This definition includes the standard definition of semi--globally practically asymptotically stability if we consider $F$ to be independent of past information. Additionally we can use it in the context of the retarded observer system \eqref{Problem formulation:OBPC:observer} and the combined system \eqref{Problem formulation:closed--loop}.
\section{Stability of the closed--loop}\label{theorem}
We now show the stability of our proposed scheme. Note that if we applied an observer which is not retarded we would require the measured output $y$ at times $t \in [t_0, t_0 + N T]$, see \cite{GK2001} for methodology and \cite{PA2004,AN2004,AP2006} for necessary/sufficient existence conditions of observer based output--feedbacks. Since this represents a time interval in the future we lack these values. In order to overcome this acausality we do not follow the way described in \cite{RN2004} and use the special structure of the ENOCF observer but retard our observer, see \cite{YS98} for an observer construction in the linear case. To show stability of our scheme we consider an abstract observer, i.e. with no special structure. Here we will only assume that the observer possesses the following properties:
\begin{itemize}
	\item[] {\bf (A1)} If $\xi_0(\theta) = x(\theta)$, $\theta \in [- NT, 0]$, $T \geq 0$, $N \in \N$ holds for the history of $x$ and $\xi$, then $\xi(t, \xi_0(\cdot), u_T) \equiv x(t, x_0, u_T)$ holds for all $t \geq 0$ and all $u_T \in \U^\N$.
	\item[] {\bf (A2)} If $\xi(0) \not = x(0)$ there exists a function $\tilde{\beta} \in \mathcal{KL}$ such that
	\begin{align*}
		\| x(t, x_0, u_T) - \xi(t, \xi_0(\cdot), u_T) \| \leq \tilde{\beta}( \| x(0) - \xi(0) \|, t)
	\end{align*}
	holds for the estimation error for all $u_T \in \U^\N$.
\end{itemize}
Based on this observer we generate a feedback which will give us the stability of the interconnected system.
\begin{theorem}[Observer--based Feedback]\label{Theorem: Observer based Feedback} $\\ $
	Given constants $\nu \geq (1 + \alpha) \Delta_{1}$, $\alpha > 0$, we consider the combined system \eqref{Problem formulation:closed--loop}
	\begin{align}
		\label{Theorem:plant}
		\dot{x}(t) = f(x(t), u_T(t)), \qquad y(t) = h(x(t))
	\end{align}
	and its associated observer system
	\begin{align}
		\label{Theorem:observer}
		\dot{\xi}(t) = g(\xi(t), \xi(t - N T), y(t - N T), u_T(t))
	\end{align}
	\eqref{Theorem:observer} satisfying the previous assumptions. Additionally initial values $\xi(\theta)$ and $x(\theta)$ are given which satisfy $\| x(\theta) - \xi(\theta) \| \leq \nu$ for all $\theta \in [t_{0} - N T, t_{0}]$. Moreover we assume that \eqref{Theorem:observer} is $(\Delta_{1}, \Delta_{2})$--semiglobally practically asymptotically stabilized by the feedback control law
	\begin{align}
		\label{Theorem:observer:feedback}
		u_T(t) := \mu(\xi(t), \xi(t - N T), y(t - N T)).
	\end{align}
	for all possible outputs $y \in \R^p$. Then the combined system \eqref{Theorem:plant}, \eqref{Theorem:observer} is $(\overline{\Delta}_{1}, \overline{\Delta}_{2})$--semiglobally practically asymptotically stabilized by the feedback law \eqref{Theorem:observer:feedback} with $\overline{\Delta}_1 := \nu - \Delta_{1}$ and $\overline{\Delta}_2 := 4 \Delta_{2}$.
\end{theorem}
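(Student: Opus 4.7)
The plan is to bound the plant state by the triangle inequality $\|x(t)\| \leq \|\xi(t)\| + \|x(t) - \xi(t)\|$, controlling the first summand through the observer's $(\Delta_1,\Delta_2)$--SPAS hypothesis and the second through assumption (A2). First, I would invoke (A2) with the initial estimation error $\|x(0)-\xi(0)\| \leq \nu$ (guaranteed by the standing hypothesis $\|x(\theta)-\xi(\theta)\|\leq\nu$ for all $\theta \in [t_0-NT, t_0]$) to obtain
\begin{align*}
\| x(t,x_0,u_T) - \xi(t,\xi_0(\cdot),u_T) \| \,\leq\, \tilde\beta(\nu, t - t_0)
\end{align*}
for all $t\geq t_0$. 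Because $\tilde\beta(\nu,\cdot)\in\mathcal L$, this estimation error decays monotonically to zero.

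Next I would use that the feedback $\mu$ is assumed to make the observer $(\Delta_1,\Delta_2)$--SPAS \emph{for every possible output} $y$, so in particular it stabilizes the observer when driven by the actual $y=h(x)$ produced by the closed--loop plant. Provided the observer history $\xi_0(\cdot)$ lies in the basin $B_{\Delta_1}^0(0)$ --- which, via the triangle inequality, the tolerance $\nu$ and the margin $\nu\geq(1+\alpha)\Delta_1$ are tailored to guarantee whenever $\|x_0\|_{\infty}\leq\overline{\Delta}_1=\nu-\Delta_1$ --- Definition \ref{Problem formulation:stability} yields $\|\xi(t)\|\leq \max\{\beta(\|\xi_0\|_\infty,t-t_0),\,\Delta_2\}$. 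Combining this with the error bound and using $\|\xi_0\|_\infty\leq\|x_0\|_\infty+\nu$ to re--express the estimate in terms of $\|x_0\|_\infty$ gives an inequality of the form
\begin{align*}
\|x(t)\| \,\leq\, \max\{\beta(\|x_0\|_\infty+\nu,t-t_0),\,\Delta_2\} + \tilde\beta(\nu,t-t_0).
\end{align*}

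To package this as the SPAS estimate required by Definition \ref{Problem formulation:stability}, I would split the time axis at a time $t^\star$ chosen so that $\tilde\beta(\nu,t^\star)\leq\Delta_2$. On $[t_0,t_0+t^\star]$ I would absorb both summands into a single $\mathcal{KL}$ envelope $\overline\beta$ dominating the transient; on $[t_0+t^\star,\infty)$ the bound collapses to $\|x(t)\|\leq 2\max\{\beta(\|x_0\|_\infty+\nu,\cdot),\Delta_2\}$, and applying $\max\{a,b\}\leq a+b\leq 2\max\{a,b\}$ one more time to account for the reparameterization from $\|\xi_0\|$ to $\|x_0\|$ produces the practical radius $\overline{\Delta}_2=4\Delta_2$.

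The main obstacle, I expect, is the bookkeeping required to convert a \emph{sum} of a $\mathcal{KL}$--term in $\|\xi_0\|$ and an $\mathcal L$--term in $\nu$ into the \emph{max} with a single $\mathcal{KL}$--term in $\|x_0\|$ demanded by Definition \ref{Problem formulation:stability}. In particular, one must verify that the reparameterized envelope $\overline\beta$ genuinely belongs to class $\mathcal{KL}$ (it need not vanish at $\|x_0\|=0$ unless the residual is handed to the practical bound $\overline{\Delta}_2$), and track how the cushion $\alpha>0$ in $\nu\geq(1+\alpha)\Delta_1$ prevents $\overline{\Delta}_1$ from collapsing. The remaining steps --- the triangle inequality and the direct applications of (A2) and of the observer's SPAS --- are routine, and the factor four ultimately arises simply from applying $\max\{a,b\}\leq a+b\leq 2\max\{a,b\}$ twice.
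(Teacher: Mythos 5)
Your skeleton --- triangle inequality, the observer's SPAS bound for $\|\xi(t)\|$, (A2) for the estimation error, then a case split to turn the resulting sum into a max --- matches the paper's. But the step where you freeze the arguments, replacing $\tilde\beta(\|x(t_0)-\xi(t_0)\|,\cdot)$ by $\tilde\beta(\nu,\cdot)$ and $\beta(\|\xi(t_0)\|,\cdot)$ by $\beta(\|x_0\|+\nu,\cdot)$, is a genuine gap, not bookkeeping. After this substitution both summands are bounded away from zero for $t$ near $t_0$ \emph{uniformly in} $\|x_0\|$, so no $\mathcal{KL}$ function of $\|x_0\|$ can dominate them near $\|x_0\|=0$; the only escape is to hand the residual $\max\{\beta(\nu,0),\Delta_2\}+\tilde\beta(\nu,0)$ to the practical radius, which then depends on $\nu$ and in general exceeds $4\Delta_2$ (note $\nu\geq(1+\alpha)\Delta_1$ while $\Delta_1$ and $\Delta_2$ are unrelated). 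This is not an artifact of loose estimation: if $x_0=0$ but $\xi_0\neq 0$, the feedback \eqref{Theorem:observer:feedback} is driven by $\xi$ and pushes $x$ away from the origin, so a bound $\|x(t)\|\leq\max\{\overline\beta(\|x_0\|,t),4\Delta_2\}$ in terms of $\|x_0\|$ alone is simply not available. The paper's second Remark works out essentially your route and records exactly this drawback (one is forced to take $\overline{\Delta}_2\geq\overline\beta(\varepsilon,0)$).

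The paper's proof sidesteps the problem by measuring stability of the \emph{combined} state: it introduces $\rho(t):=\|x(t)\|+\|\xi(t)\|$, uses $\|x(t_0)-\xi(t_0)\|\leq\rho(t_0)$ and $\|\xi(t_0)\|\leq\rho(t_0)$ so that every term in \eqref{thm:observer:proof:eq2} carries $\rho(t_0)$ as its first argument, treats $\rho(t_0)=0$ separately, and only then runs the case split; the envelopes $2\beta+\tilde\beta$ (when $\beta\geq\Delta_2$) and $2\tilde\beta$ (when $\beta<\Delta_2$ but $\tilde\beta\geq 2\Delta_2$) are then genuinely $\mathcal{KL}$ in $\rho(t_0)$, and the remaining case ($\beta<\Delta_2$ and $\tilde\beta<2\Delta_2$) delivers $\rho(t)<4\Delta_2$, i.e.\ $\overline{\Delta}_2=4\Delta_2$. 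If you redo your argument with $\rho(t_0)$ in place of $\nu$ and $\|x_0\|+\nu$, your case analysis goes through essentially verbatim; your identification of $\overline{\Delta}_1=\nu-\Delta_1$ and your account of where the factor four comes from are consistent with the paper.
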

\begin{proof}
	Since \eqref{Theorem:observer} is $(\Delta_{1}, \Delta_{2})$--semiglobally practically asymptotically stable for the chosen control there exists an attraction rate $\beta \in \mathcal{KL}$ and a pair $(\Delta_{1}, \Delta_{2})$ of positive real--valued numbers such that
	\begin{align*}
		\| \xi(t) \| \leq \max \left\{ \beta( \| \xi(t_{0}) \|, t), \Delta_{2} \right\}
	\end{align*}
	for all $t \geq t_0$ and $\xi(\theta) \in B_{\Delta_{1}}^{0}(0)$, $\theta \in [t_0 - NT, t_0]$. Using the triangle inequality and (A2) we obtain
	\begin{align}
		\label{thm:observer:proof:eq1}
		\| x(t) \| & \leq \| \xi(t) \| + \| x(t) - \xi(t) \| \\
		& \hspace*{-0.3cm} \leq \max \left\{ \beta( \| \xi(t_{0}) \|, t), \Delta_{2} \right\} + \tilde{\beta}( \| x(t_{0}) - \xi(t_{0}) \|, t). \nonumber
	\end{align}
	If we consider a mixed $1$--norm of this vector
	\begin{align*}
		\rho(t) := \left\| \left( \begin{array}{c} \| x(t) \| \\ \| \xi(t) \| \end{array} \right) \right\|_{1}
	\end{align*}
	as a function of time we get
	\begin{align}
		\rho(t) \leq & \max \{ \beta( \| \xi(t_{0}) \| , t), \Delta_{2} \} + \tilde{\beta}( \| x(t_{0}) - \xi(t_{0}) \|, t ) \nonumber \\
		& + \max \{ \beta( \| \xi(t_{0}) \|, t), \Delta_{2} \} \nonumber \\
		& \hspace*{-0.5cm}\leq 2 \max \left\{ \beta \left( 
		\rho(t_0), t \right), \Delta_{2} \right\} 
		+ \tilde{\beta}( \| x(t_{0}) \| + \| \xi(t_{0}) \|, t ) \nonumber \\
		& \hspace*{-0.5cm}= 2 \max \left\{ \beta \left( 
		\rho(t_0), t \right), \Delta_{2} \right\} 
		+ \tilde{\beta}( \rho(t_0), t ). \label{thm:observer:proof:eq2}
	\end{align}
	We can conclude by $\| x(t_{0}) - \xi(t_{0}) \| \leq \nu$, $\nu \geq (1 + \alpha) \Delta_{1}$, $\alpha > 0$, and $\xi(\theta) \in B_{\Delta_{1}}^{0}(0)$, $\theta \in [t_0 - NT, t_0]$, that these conditions can only be satisfied for all $x(\theta) \in B_{\nu - \Delta_{1}}(0)$, $\theta \in [t_0 - NT, t_0]$. Since $\alpha > 0$ this is a nonempty closed set and we can restrict ourselves to the set $B_{\nu - \Delta_{1}}^{0}(0)$, in particular choose
	\begin{align}
		\overline{\Delta}_{1} := \nu - \Delta_{1}
	\end{align}
	and conclude $x(t_{0}) \in B_{\overline{\Delta}_{1}}^{0}(0) \subset B_{\nu}^{0}(0)$.\\
	If $\rho(t_0) = 0$ we are done since $x(t_{0}) = \xi(t_{0}) = 0$ and due to the $\mathcal{KL}$--property of $\tilde{\beta}$ the error will be zero for all $t \geq t_{0}$. Hence we can identify the trajectories of $\xi(t) \equiv x(t)$ for all $t \geq t_{0}$, set
	\begin{align}
		\overline{\Delta}_{2} := \Delta_{2}
	\end{align}
	and get
	\begin{align*}
		\| x(t) \| \leq \max \left\{ \beta( \| x(t_{0}) \|, t), \overline{\Delta}_{2} \right\}
	\end{align*}
	for all $t \geq t_{0}$ and all $x(t_{0}) \in B_{\overline{\Delta}_{1}}^{0}(0)$. \\
	If $\rho(t_0) \not = 0$ we want to obtain
	\begin{align*}
		\rho(t) \leq \max \left\{ \overline{\beta} \left( 
		\rho(t_0), t \right), \overline{\Delta}_{2} \right\}
	\end{align*}
	To this end we consider \eqref{thm:observer:proof:eq2} for any fixed $t \geq t_{0}$. Then one of the following cases holds:
	\begin{itemize}
		\item[(1)] $\beta \left( \rho(t_0), t \right) \geq \Delta_{2}$: Using this in \eqref{thm:observer:proof:eq2} it follows that
		\begin{align*}
			\rho(t) \leq 2 \beta \left( \rho(t_0), t \right) + \tilde{\beta} \left( \rho(t_0), t \right)
		\end{align*}
		and we can directly define
		\begin{align*}
			\overline{\beta}^{(1)}(r, t) := 2 \beta(r, t) + \tilde{\beta}(r, t).
		\end{align*}
		\item[(2)] $\beta \left( \rho(t_0), t \right) < \Delta_{2}$: From \eqref{thm:observer:proof:eq2} we can see
		\begin{align*}
			\rho(t) \leq 2 \Delta_{2} + \tilde{\beta} \left( \rho(t_0), t \right)
		\end{align*}
		Now we consider the following cases:
		\begin{itemize}
			\item[(2a)] $\tilde{\beta} \left(\rho(t_0), t \right) < 2 \Delta_{2}$: Here we get $\rho(t) < 4 \Delta_{2}$ and we can use
			\begin{align*}
				\overline{\Delta}_{2}^{(2)} := 4 \Delta_{2}.
			\end{align*}
			\item[(2b)] $\tilde{\beta} \left( \rho(t_0), t \right) \geq 2 \Delta_{2}$: In this case we get $\rho(t) \leq 2 \tilde{\beta} \left( \rho(t_0), t \right)$ and define
			\begin{align*}
				\overline{\beta}^{(2)}(r, t) := 2 \tilde{\beta}(r, t).
			\end{align*}
		\end{itemize}
	\end{itemize}
	Using these results we see that
	\begin{align*}
		\rho(t) \leq & \max \left\{ \overline{\beta}^{(1)} \left( 
		\rho(t_0) , t \right), \overline{\beta}^{(2)} \left(
		\rho(t_0), t \right), \overline{\Delta}_{2}^{(2)} \right\}.
	\end{align*}
	Now we can simplify this inequality since the maximum of two $\mathcal{KL}$--functions is still a $\mathcal{KL}$--function. Hence the stated inequality holds for all $t \geq t_{0}$ if we use
	\begin{align*}
		\overline{\beta}( r , t) & := \max \left\{ \overline{\beta}^{(1)}(r, t), \overline{\beta}^{(2)}(r, t) \right\}, \quad \overline{\Delta}_{2} & := 4 \Delta_{2}.
	\end{align*}
	Also there exist $T$, $\hat{T} \geq t_{0}$ such that
	\begin{align*}
		\overline{\beta} \left( 
		\rho(t_0), t \right) &
		\begin{cases}
			\geq \overline{\Delta}_{2} \;, \quad t_{0} \leq t < T \\
			< \overline{\Delta}_{2} \;, \quad t \geq T
		\end{cases} \quad \text{and} \\
		\beta \left( \| \xi(t_{0}) \| , t \right) &
		\begin{cases}
			\geq \Delta_{2} \;, \quad t_{0} \leq t < \hat{T} \\
			< \Delta_{2} \;, \quad t \geq \hat{T}
		\end{cases}.
	\end{align*}
	Hence the trajectory of the $x$--system will stay within $B_{\overline{\Delta}_{2}}^{0}(0)$ for $t \geq \max \{ T, \hat{T} \}$.
\end{proof}
\begin{remark}
	The assumptions regarding the observer system \eqref{Theorem:observer} in Theorem \ref{Theorem: Observer based Feedback} are stronger then ISS. However, we suppose that a generalization to ISS is possible.
\end{remark}
\begin{remark}
	The stated bounds can be tightened at a certain cost. If one considers \eqref{thm:observer:proof:eq1} and the estimate $\| \xi(t_{0}) \| = \| x(t_{0}) \| + \nu$ we get $\| x(t) \| \leq \tilde{\beta}( 2 \| x(t_{0}) \| + \nu,$ $t) + \max \left\{ \beta( 2 \| x(t_{0}) \| + \nu, t), \Delta_{2} \right\}$. Now we have to consider the cases
	\begin{align*}
		\text{(i)} \; \| x(t_{0}) \| \geq \varepsilon \qquad \text{(ii)} \; \| x(t_{0}) \| < \varepsilon
	\end{align*}
	where $0 < \varepsilon \leq \nu$. In case (i) there exists a $\lambda \in \R$: $\nu = \lambda \varepsilon \leq \lambda \| x(t_{0}) \|$ and it follows that
	\begin{align*}
		\| x(t) \| \leq \max \left\{ \overline{\beta}( \| x(t_{0}) \|, t), \overline{\Delta}_{2} \right\}
	\end{align*}
	where $\overline{\beta}( r , t) := \beta( (2 + \lambda) \mu r , t) + \tilde{\beta}( (2 + \lambda) r , t)$ and $\overline{\Delta}_{2} := \Delta_{2} + \overline{\varepsilon}$. Here $\mu \geq 1$ needs to be chosen as follows: Set $\overline{\varepsilon} > 0$ arbitrary. Hence there exists a $T \geq t_{0}$ such that
	\begin{align*}
		\tilde{\beta}( (2 + \lambda) \| x(t_{0}) \| , t) \leq \tilde{\beta}( (2 + \lambda) \overline{\Delta}_{1} , t)
		\begin{cases}
			> \overline{\varepsilon}, \; t_{0} \leq t < T \\
			\leq \overline{\varepsilon}, \; t \geq T.
		\end{cases}
	\end{align*}
	and moreover a $\mu \in \R$ such that $\overline{\beta}( \varepsilon , t) > \Delta_{2} + \tilde{\beta}( (2 + \lambda) \overline{\Delta}_{1} , t)$ for all $t_{0} \leq t < T$. Hence there exists a $\hat{T} \geq T$ such that
	\begin{align*}
		\overline{\beta}( \| x(t_{0}) \| , t) 
		\begin{cases}
		> \overline{\Delta}_{2} \;, \quad t_{0} \leq t < \hat{T} \\
		\leq \overline{\Delta}_{2} \;, \quad t \geq \hat{T}
		\end{cases}
	\end{align*}
	for all $\| x(t_{0}) \| \geq \varepsilon$ where $\overline{\beta}$ is a $\mathcal{KL}$--function and the stated stability conditions hold since we have only enlarged the right hand side.\\
	The drawback here is that in case (ii) we definitely need $\overline{\Delta}_{2} \geq \overline{\beta}\left( \varepsilon, 0 \right)$ to guarantee semiglobally practically asymptotic stability. To see this consider $\| x(t_{0}) \| = 0$. Then the invariance of the set $\{ 0 \}$ is violated and hence no $\mathcal{KL}$--function can be found. Hence semiglobally practically asymptotic stability can only be guaranteed if the maximum of the right hand side is always given by $\overline{\Delta}_{2}$.\\
	Making use of the $\mathcal{L}$--property of the function $\overline{\beta}$ we can conclude that $\overline{\Delta}_{2} := \overline{\beta}( \varepsilon, 0) + \Delta_{2}$ and hence the trajectory of the $x$--system will stay within $B_{\overline{\Delta}_{2}}^{0}(0)$.
\end{remark}
\begin{remark}
	Note that by increasing $\alpha$ the basin of attraction $B_{\overline{\Delta}_{1}}^{0}(0)$ can be enlarged as long as the other conditions are still valid.
\end{remark}
\begin{remark}
	One has to keep in mind that there exists a fixed delay between an occurring disturbance in the $x$--system at time $t_{1} \geq t_0 - N T$ and the reaction of the observer $\xi$ at time $t_{2} = t_{1} + N T$. Since the length of this delay is exactly equal to the length of the OBPC horizon this disturbance will still be taken into account.
\end{remark}
\section{Example}\label{example}
In order to check our proposed scheme we consider two simple examples
\begin{align}
	\label{examples}
	\dot{x}(t) & = A_i x(t) + B u(t) \\
	y(t) & = C x(t) \nonumber
\end{align}
where
\begin{align*}
 	A_1 & = \begin{pmatrix} -1 & 1 \\ 1 & -1 \end{pmatrix}, \; A_2 = \begin{pmatrix} 0 & 1 \\ -1 & 0 \end{pmatrix}, \\
	B & = \begin{pmatrix} 1 & 0 \\ 0 & 1 \end{pmatrix} \; \text{and} \; C = \begin{pmatrix} 1 \\ 0 \end{pmatrix}^\top.
\end{align*}
For these examples we constructed the Luenberger observers
\begin{align*}
	\dot{\xi}(t) = A_i \xi(t) + B u(t) - \Lambda(\lambda_i) K_i ( C \xi(t) - y(t) )
\end{align*}
with
\begin{align*}
	\Lambda(\lambda_i) = \begin{pmatrix} \lambda_i & 0 \\ 0 & \lambda_i^2 \end{pmatrix} \; \text{and} \; K_i = \begin{pmatrix} k_{i,1} \\ k_{i,2} \end{pmatrix}.
\end{align*}
Using the parameter $\lambda_1 = 1.2$, $k_{i,1} = 1$ and $k_{i,2} = 0.5$ the eigenvalues of $(A - \Lambda(\lambda_1) K_i C)$ are negative and hence the observer converges.\\
As a retarded observer we used the same construction with a time shift of the prediction horizon length $\tau = NT$ in the measurement component, that is
\begin{align*}
	\dot{\xi}(t) = A_i \xi(t) + B u(t) - \Lambda(\lambda_i) K_i ( C \xi(t - \tau) - y(t - \tau) ).
\end{align*}
To show convergence we consider the Lyapunov candidate
\begin{align*}
	V(\eta(t)) = \frac{1}{2} \eta(t)^\top \Lambda^{-1}(\lambda) P_i \Lambda^{-1}(\lambda) \eta(t)
\end{align*}
for the error $\eta(t) = \xi(t) - x(t)$ with $P_i = P_i^\top > 0$ such that
\begin{align*}
	P_i \big( A_i - \Lambda(\lambda) K_i C \big) + \big( A_i - \Lambda(\lambda) K_i C \big)^\top P_i = - \text{Id},
\end{align*}
holds. The error dynamic is given by
\begin{align*}
	\dot{\eta}(t) = A_i \eta(t) - \Lambda(\lambda) K_i C \eta(t - \tau).
\end{align*}
which gives us
\begin{align*}
	\dot{V}(\eta(t)) = & \eta(t)^\top W(\lambda) \big( A_i \eta(t) - \Lambda(\lambda) K_i C \eta(t - N T) \big).
\end{align*}
where $W_i(\lambda) := \Lambda^{-1}(\lambda) P_i \Lambda^{-1}(\lambda)$. Using the integral equation for $\eta(t - NT)$ we obtain
\begin{align*}
	\eta(t - NT) & = \eta(t) + \int\limits_{t}^{t - NT} \dot{\eta}(\tau) d\tau \\
	& \hspace*{-1cm} = (\text{Id} + e^{-A_i \cdot NT}) \eta(t) + e^{\Lambda(\lambda) K_i C \cdot NT} \eta(t - NT)
\end{align*}
The parameter $\lambda$, $k_1$ and $k_2$ were chosen such that $(\text{Id} - e^{\Lambda(\lambda) K_i C \cdot NT})$ is invertible, hence we obtain
\begin{align*}
	\dot{V}(\eta(t)) & = \eta(t)^\top W(\lambda) \big( A_i - \Lambda(\lambda) K_i C \\
	& \quad (\text{Id} - e^{\Lambda(\lambda) K_i C \cdot NT})^{-1} (\text{Id} + e^{-A_i \cdot NT}) \big) \eta(t).
\end{align*}
Since $W(\lambda)$ is positive definite it remains to show that
\begin{align*}
	\mathcal{A}_i := & \big( A_i - \Lambda(\lambda) K_i C (\text{Id} - e^{\Lambda(\lambda) K_i C \cdot NT})^{-1} \\
	& \qquad (\text{Id} + e^{-A_i \cdot NT}) \big)
\end{align*}
has negative eigenvalues which can be verified for both examples given the previously mentioned parameter.\\
Moreover the value function is bounded from above since using the properties of $\Lambda$ we have $V(\eta(t)) \leq \frac{\lambda_{\max}(P)}{2 \lambda^{2}} \| \eta(t) \|^{2}  =: \alpha_2( \| \eta(t) \| )$ and similarly from below due to $V(\eta(t)) \geq \frac{\lambda_{\min}(P)}{2 \lambda^{2 n}} \| \eta(t) \|^{2} =: \alpha_1( \| \eta(t) \| )$. Figure \ref{Figure:MPC:ex1} shows the resulting trajectories for the standard MPC scheme using the initial values $x_0 = (11, 8)$ and $\xi_0 = (0, 0)$. The sampling time is set to $T = 0.1$ and the horizon length parameter is taken to be $N = 5$
\begin{figure}[!ht]
	\caption{Resulting trajectories for Example 1 using standard MPC}
	\label{Figure:MPC:ex1}
	\includegraphics[width=0.48\textwidth]{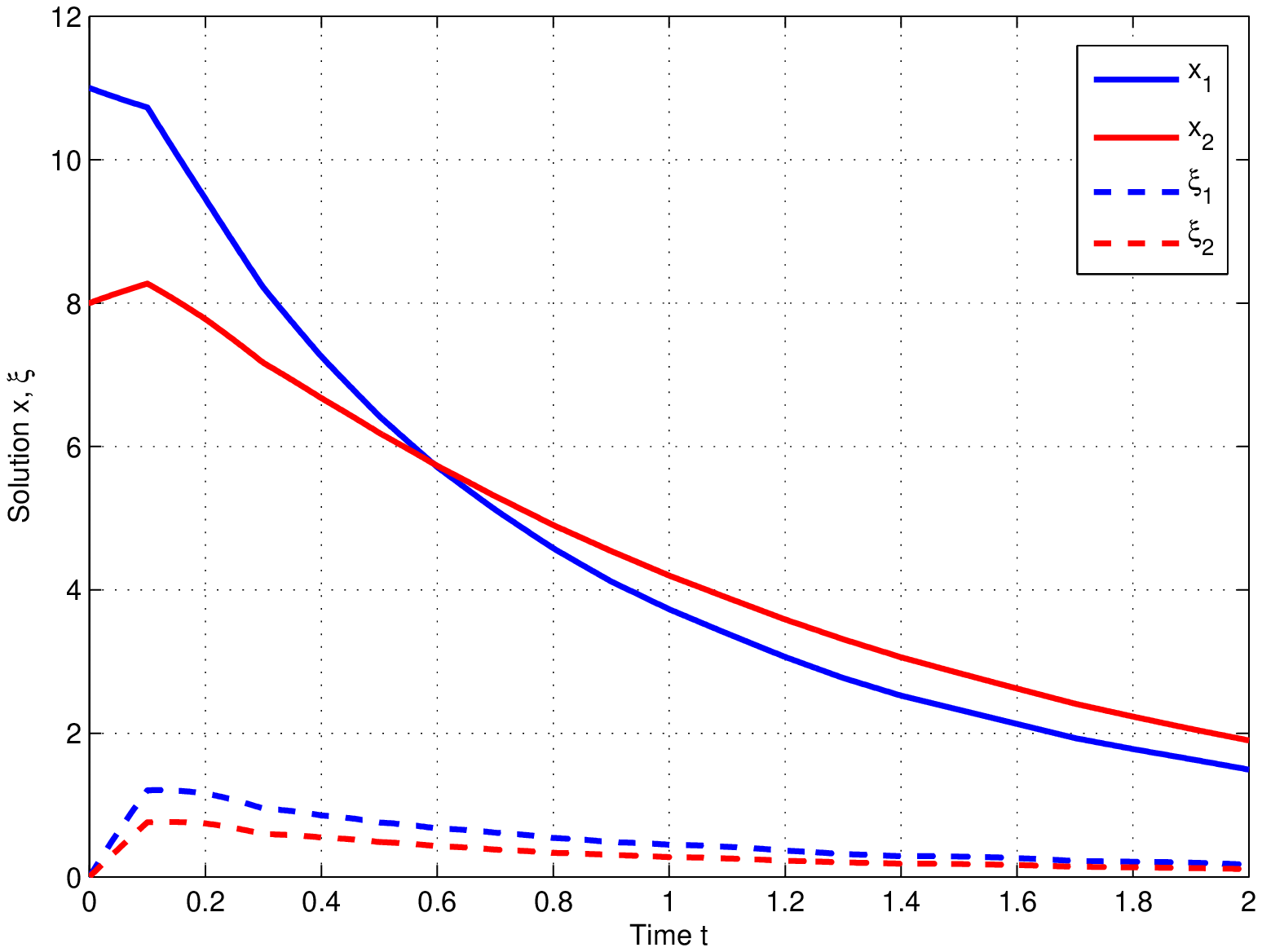}
\end{figure}
Here no optimization takes place within the first step of the MPC algorithm since the initial guess of the observer is set to zero. Note that the resulting control may even exhibit a wrong sign according to the observed state. The observer based predictive controller on the other hand recognizes this deviation, cf. Figure \ref{Figure:OBPC:ex1}. Here we chose the initialization of the history of the system under control to be constant at $(11, 8)$.
\begin{figure}[!ht]
	\caption{Resulting trajectories for Example 1 using OBPC}
	\label{Figure:OBPC:ex1}
	\includegraphics[width=0.48\textwidth]{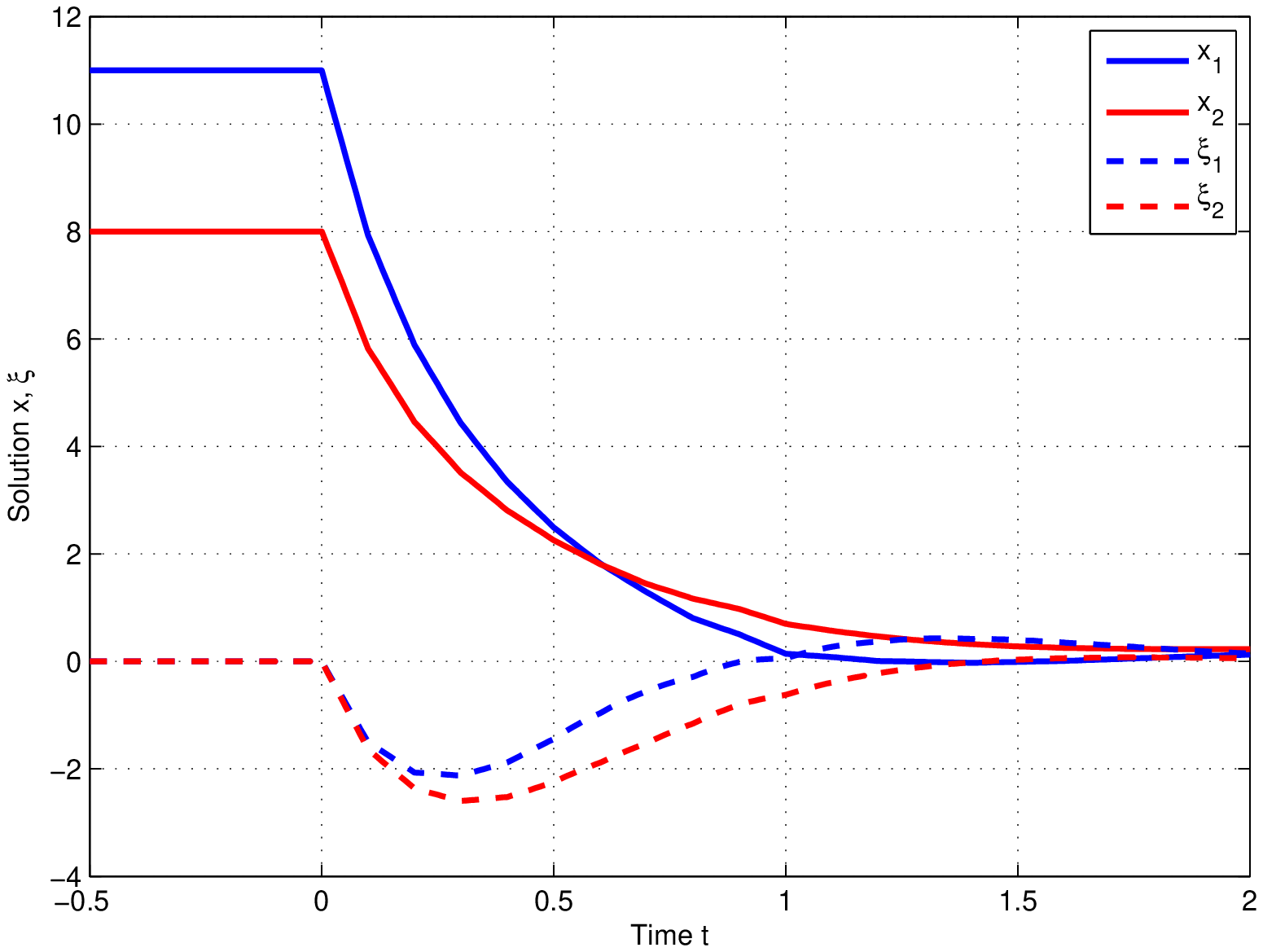}
\end{figure}
Moreover the transient behavior of the observer steers the system state even stronger to zero since it points in negative direction first. Here one has to keep in mind that observer and system state changed places, hence the system state converges towards the observer state. Considering a variety of initial values the observer based variant seems to enhance the stabilization process for this example.\\
Within our second example we do neither change any of the parameter nor the initial values. At first glance of Figures \ref{Figure:MPC:ex2} and \ref{Figure:OBPC:ex2}, one could suspect that the presented setup does not work properly for this example. Analyzing the background we see that the resulting swinging trajectory is due to the poor convergence rate of the retarded observer coming from our construction and is not due to the governing control setup. Particularly in contrast to our first example, the eigenvalues of the Lyapunov function matrix $(W(\lambda) \cdot \mathcal{A})$ of the error development cannot be tuned arbitrarily. The Luenberger observer considered in the standard MPC setup, however, does not exhibit this difficulty and is advantageous for this reason.
\begin{figure}[!ht]
	\caption{Resulting trajectories for Example 2 using standard MPC}
	\label{Figure:MPC:ex2}
	\includegraphics[width=0.48\textwidth]{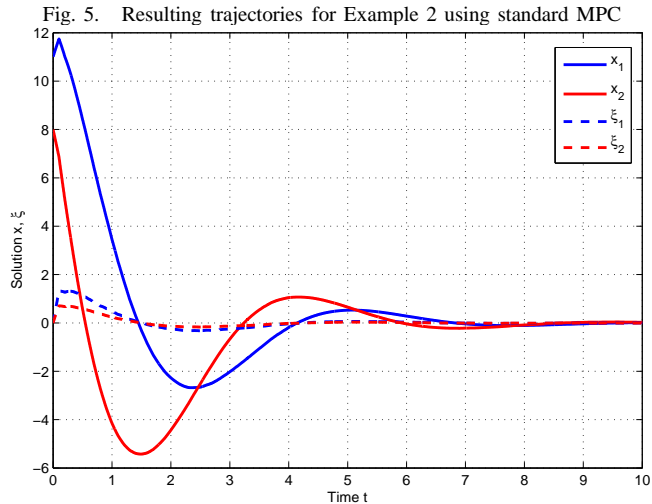}
\end{figure}
\begin{figure}[!ht]
	\caption{Resulting trajectories for Example 2 using OBPC}
	\label{Figure:OBPC:ex2}
	\includegraphics[width=0.48\textwidth]{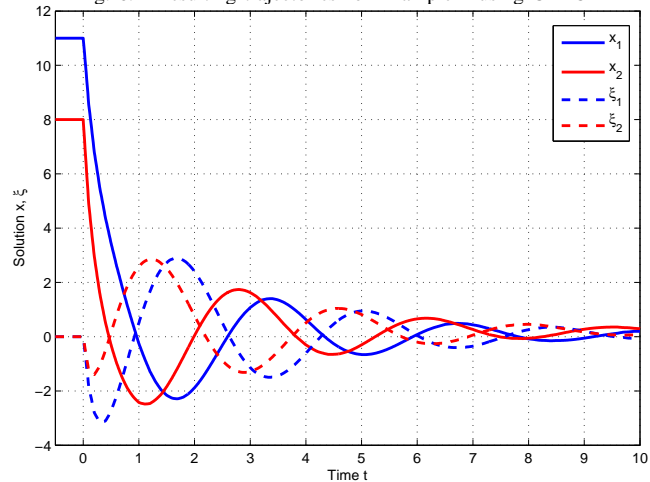}
\end{figure}
\FloatBarrier
This example shows the main difficulty arising in this context: The considered Luenberger observer uses a linear compensation for the deviation. The exponential evolution of this error due to the retarded nature of the observer, however, may exceed any linear bound if the Lipschitz constant of the system is greater then one. \\
The success of the presented setup will hence depend massively on the availability of retarded observers which show a tunable rate of convergence.
\section{Conclusion}\label{conclusion}
In this work we have shown the stability analysis of the newly proposed MPC scheme. Moreover we have shown that this approach may lead to a considerable improvement compared to the standard setup. Future research work concerns the detailed comparison analysis. Yet, we expect a better behavior of the solutions due to the more consistent computation of the control values. In particular the recovery time of the closed--loop system regarding disturbances will be a main issue. Another important aspect is development and analysis of retarded observers, at best with tunable rate of convergence.
\bibliography{bib_observer.bib}
\bibliographystyle{plain}
\end{document}